\newtheorem{theorem}{Theorem}
\newtheorem{remark}[theorem]{Remark}
\newtheorem{proposition}[theorem]{Proposition}
\newcommand{\p}{\partial}
\newcommand{\eps}{\varepsilon}
\newcommand{\dd}{\hspace{1pt}{\rm d}\hspace{0.5pt}}
\newcommand{\ee}{{\rm e}\hspace{1pt}}
\newcommand{\ii}{{\rm i}\hspace{1pt}}
\newcommand{\R}{\mathbb R}
\newcommand{\N}{\mathbb N}
\newcommand{\C}{\mathbb C}
\newcommand{\cB}{{\mathcal B}}
\newcommand{\cC}{{\mathcal C}}
\newcommand{\cD}{{\mathcal D}}
\newcommand{\cE}{{\mathcal E}}
\newcommand{\cF}{{\mathcal F}}
\newcommand{\cM}{{\mathcal M}}
\newcommand{\cS}{{\mathcal S}}
\newcommand{\bone}{\mathbf{1}}
\newcommand{\Cinf}{\ensuremath{{\mathcal C}^\infty}}
\newcommand{\WF}{\mathrm{WF}}
\newcommand{\singsupp}{\mathop{\mathrm{sing supp}}}
\newcommand{\supp}{\mathop{\mathrm{supp}}}
\renewcommand{\Re}{\mathop{\mathrm{Re}}}
\date{}
\title{Solutions to semilinear wave equations of very low regularity}
\author{Heiko Gimperlein, Michael Oberguggenberger\thanks{Universit\"{a}t Innsbruck, Engineering Mathematics,
Technikerstra\ss e 13, 6020 Innsbruck,
Austria, (heiko.gimperlein@uibk.ac.at, michael.oberguggenberger@uibk.ac.at)}
}
\begin{document}

\providecommand{\keywords}[1]{{\textit{Key words:}} #1}

\maketitle

\begin{abstract}
\noindent This paper finds solutions to semilinear wave equations with strongly anomalous propagation of singularities. For very low Sobolev regularity we obtain solutions whose singular support propagates along any ray inside or outside the light cone. In one dimension these solutions exist for any Sobolev exponent $s<\frac{1}{2}$ in space, while classical results show that the singular support of solutions with higher regularity is contained in the light cone. The spatial Fourier transform of these anomalous solutions is supported in a half-line. We obtain wellposedness results in such function spaces when the problem is ill-posed for Sobolev data without the support condition and, in some cases, obtain wellposedness below $L^2(\R)$. The results are based on new multiplier theorems for Sobolev spaces satisfying the support condition. Extensions to higher space dimensions are given.

\end{abstract}

\keywords{nonlinear wave equations, propagation of singularities, local wellposedness, distributional solutions}

\section{Introduction}
\label{sec:intronew}

This paper observes new phenomena for the wellposedness and propagation of singularities for semilinear wave equations with initial data of very low Sobolev-regularity. We address the problem
\begin{equation}\label{eq:NLW}
  \p_t^2 u - \Delta u = \pm u^p,\quad u(x,0) = u_0(x), \ \p_t u(x,0) = u_1(x),
\end{equation}
in space dimension $n$, where  $p\geq 2$ is assumed to be a positive integer. The specialization to this case is needed for two reasons. First, we wish to study propagation of singularities from the initial data, and hence we need a smooth nonlinearity in order to avoid the occurrence of additional singularities. Second, we will use the H\"ormander product of distributions, so only integer powers are amenable.

It is a general principle in linear wave propagation that sharp wave crests (singularities) propagate along light cones, or more precisely, along the bicharacteristics of the linear wave operator $\p_t^2 u - \Delta u$. For semilinear wave equations, singularities of sufficiently smooth solutions are known to propagate along unions of bicharacteristics, where in space dimension $n>1$  new -- but weaker -- singularities
may arise at points of intersection of incoming wave crests. In dimension $n=1$, a fundamental result for problem \eqref{eq:NLW} assures that the singularities of any  solution in $L^\infty_{\rm loc}(\R^2)$ propagate only along light cones \cite{RauchReed:1980,Reed:1978}.

In this article we obtain solutions
in $\cC([-T,T]:H^s_{\rm loc}(\R))$, for any $s < \frac12$, whose singular support lies inside or outside the light cone. As $\cC([-T,T]:H^s_{\rm loc}(\R)) \subset L^\infty_{\rm loc}(\R^2)$ when $s>\frac{1}{2}$, they establish a sharp threshold $s=\frac{1}{2}$ for the Sobolev exponent between solutions with expected, respectively anomalous, propagation of singularities. Note that the maximal  regularity of solutions with unexpected properties has attracted significant recent interest for equations from continuum mechanics and geometry \cite{DeLellis:1999}.

For the anomalous solutions $u$ presented here, at fixed time $t$ the Fourier transform $\widehat{u}$ with respect to $x$  is supported in a half-line.
Motivated by this fact, we also extend the range of wellposedness for problem \eqref{eq:NLW} to data and solutions $u$ with this property. 
The paper addresses two different, yet related issues: (a) anomalous propagation of singularities for solutions to \eqref{eq:NLW} of low Sobolev regularity in one and higher space dimension and (b) wellposedness for a certain class of initial data of low Sobolev regularity in one space dimension.
The main results of this article can be summarized as follows:\\

\noindent \textbf{Theorem.} Let $n=1$ and $H^s_\Gamma(\R) = \{f\in H^s(\R): \supp \widehat{f}\subset [0,\infty)\}$.

(a) (Anomalous propagation) For any $c\neq \pm 1$ there exist solutions to \eqref{eq:NLW}  with singular support along the line $\{x + ct= 0, t\in\R\}$, i.e., along  any ray off the light cone.
More precisely, for any $c\neq \pm 1$ and any $s < \frac12$ there are $H_{\rm loc}^s$-solutions with this property.

(b) (Low regularity wellposedness) Problem \eqref{eq:NLW} is wellposed in $H^s_\Gamma(\R)$ for $p=2$ and $s > -\frac12$ and for $p\geq 3$ and $s > \frac{1}{2}-\frac{1}{2p-4}$.\\

Solutions to \eqref{eq:NLW} are understood as follows: the nonlinearity is defined by H\"{o}rmander's wave front set criterion, and the equation is satisfied in the sense of distributions.
Part (a) combines Proposition \ref{prop:NLW1D} with the discussion in Section \ref{sec:stationary}. The construction builds on recent examples found by one of the authors \cite{MO:2021};
for any $s<\frac12$ there is such a solution in $\cC([-T,T]:H^s_{\rm loc}(\R))\cap\cC^1([-T,T]:H^{s-1}_{\rm loc}(\R))$, if $p$ is large enough. Part (b) is the content of Theorem \ref{thm:1DNLW}. Note that it improves the wellposedness results for data in $H^s(\R)$ without the support condition, where problem \eqref{eq:NLW} is wellposed  if  $s>\frac12 - \frac1{p}$ and illposed if $s<\frac12 - \frac1{p}$.
The solutions constructed in (a) do not fall into the known $H^s$-wellposedness regimes.

Section \ref{sec:ndim} addresses the extension of this Theorem to higher space dimensions $n>1$. We give examples of anomalous solutions to \eqref{eq:NLW} which belong to
$\cC([-T,T]:H^s_{\rm loc}(\R^n))\cap \cC^1([-T,T]:H^{s-1}_{\rm loc}(\R^n))$ for $s < \frac{n}2$. Here the nonlinearity is even defined classically as the $p$-th power of an $L^p_{\rm loc}$-function. A similar extension of the wellposedness theory remains open.

The remainder of this introduction is devoted to a literature review of propagation of singularities and of critical Sobolev exponents.

The investigation of propagation of singularities in semilinear hyperbolic equations and systems started with the discovery of Jeffrey Rauch and Michael Reed \cite{RauchReed:1980,RauchReed:1981} that -- unlike in the linear case -- singularities may arise that cannot be traced back via bicharacteristics to singularities in the initial data, but may be produced at later times by the interaction of singularity bearing bicharacteristics. For a survey of the huge number of results up to around 1990 we refer to the monograph \cite{Beals:1989}. Rauch and Reed coined the term \emph{anomalous singularities} for this phenomenon. However, these ``anomalous singularities'' still propagated along characteristics/bicharacteristics, as opposed to the noncharacteristic singularities in the present paper, which are even more anomalous.

There is one exception, namely the wave equation $\p_t^2 u - \p_x^2 u = f(u)$ with smooth nonlinearity $f(\cdot)$ in one space dimension (actually any $(2\times 2)$-first order system in $n=1$) where the propagation is as in the linear case. This is due to the fact that there are only two characteristic directions, thereby avoiding nonlinear interaction at later times. Here the results of \cite{RauchReed:1980,Reed:1978} say that for distributional solutions to the semilinear wave equation which belong to $L^\infty_{\rm loc}(\R^2)$ no anomalous singularities arise. (This applies, in particular, to solutions which belong to $\cC([-T,T]:H^s(\R))$ with $s > \frac12$.) For example, if the singular support of the initial data is $\{x=0\}$ then the solution is smooth except possibly along the light cone $\{|x|=|t|\}$.

In higher space dimensions, the first and prototypical result is due to Rauch \cite{Rauch:1979}. It says the following: Suppose that $u$ is a distributional solution to \eqref{eq:NLW} (even with a polynomial nonlinearity) which belongs to $H^s_{\rm loc}(\R^n\times\R)$ with $s>(n+1)/2$ and let the initial data belong to $\Cinf(\R^n\setminus\{0\})$. Then $u$ is $\Cinf$ on $\{|x| > |t|\}$, and it belongs to $H^{s+1+\sigma}_{\rm loc}(\R^n\times\R)$ on $\{|x| < |t|\}$ for all $\sigma < s - (n+1)/2$. It is also known that the singular support of the solution may contain the solid cone $\{|x| \leq |t|\}$, see \cite{Beals:1983}, where an example is given with Sobolev regularity just above $3s -n +2$ in  $\{|x| < |t|\}$.

These results date back to a time when the investigation of critical exponents had not yet been picked up. Accordingly, the usual setting was in $H^s_{\rm loc}(\R^n\times\R)$ with $s>(n+1)/2$, in which case $H^s_{\rm loc}$ is an algebra. The methods were commonly based on a microlocal analysis of the nonlinear action \cite{BealsReed:1982,RauchReed:1982}, as well as on paradifferential calculus \cite{Bony:1981}. Few papers addressed propagation of local regularity in lower Sobolev regularity, as the paper \cite{GerardRauch:1987} which went as low as $s>0$ (but still requiring $L^\infty_{\rm loc}$); see also the early counterexamples of anomalous bicharacteristic behavior in low regularity in the second part of \cite{Rauch:1981}.

In the meantime the wellposedness of problem \eqref{eq:NLW} for data of low regularity has been clarified. Recall that problem \eqref{eq:NLW} is locally wellposed in $H^s$ if, for every $u_0\in H^s(\R^n)$, $u_1\in H^{s-1}(\R^n)$, there is $T>0$ and a unique distributional solution $u$ belonging to $\cC([-T,T]:H^s(\R^n))\cap\cC^1([-T,T]:H^{s-1}(\R^n))$. Further, $u$ is required to belong to a space on which the $p$-th power is welldefined (usually $L^p_{\rm loc}(\R^{n+1}$)), and the map $(u_0,u_1)\to u$ should be continuous. The $p$-th power here may  also be understood as a Fourier product, see Section \ref{Sec:multiplication}.

As summarized in \cite{Christ:2003,Forlano:2020}, the critical regularity for local $H^s$-wellposedness of problem \eqref{eq:NLW} is
\[
   s_{\rm crit} = \max\left(\frac{n}2 - \frac{2}{p-1},\frac{n+1}4 - \frac{1}{p-1},0\right).
\]
Essentially, wellposedness has been established for $s\geq s_{\rm crit}$, possibly with additional constraints in certain ranges of $p$ and $n$, while illposedness has been proven for $s < s_{\rm crit}$, again with certain gaps in the ranges. Relevant literature is \cite{Kapitanski:1994,Keel:1998,Lindblad:1993,Lindblad:1996,LindbladSogge:1995,Tao:1999}, as well as recent directions for the probabilistic wellposedness \cite{Burq:2008,Oh:2016,Oh:2021,Oh:2022,Tzvetkov:2019}. For more details, the reader is referred to the summaries in \cite{Christ:2003,Forlano:2020}. The case $n=1$ deserves special attention. The critical exponent is
\begin{equation}\label{eq:sob}
     s_{\rm sob} = \max\left(\frac12 - \frac1{p}, 0\right).
\end{equation}
The stronger bound is needed in order to have $H^s(\R) \subset L^p(\R)$. It was shown in \cite{Christ:2003} that problem \eqref{eq:NLW} is $H^s$-illposed for $s <  s_{\rm sob}$. In addition, it was shown there that norm inflation takes place for
$\frac12 - \frac1{p-1} < s <  s_{\rm sob}$ and for $s\leq -\frac12$. Further, the authors also showed that the solution map is discontinuous at $(0,0)$ for $s\leq \frac12 - \frac1{p-1}$. These results were complemented by \cite{Forlano:2020} which proved norm inflation also in the range $s < 0$. It is also noted in \cite{Christ:2003} that problem \eqref{eq:NLW} is locally $H^s$-wellposed when $n=1$ and $s \geq s_{\rm sob}$.

\section{Notation}
\label{sec:notations}

The notation generally follows \cite{Schwartz:1966}. In particular, the Fourier transform is used in the form
\[
   \cF f(\xi) = \widehat{f}(\xi) = \int \ee^{-2\pi\ii x\xi} f(x)\dd x.
\]
As usual, $\langle \xi \rangle = (1 + |\xi|^2)^{1/2}$. For $s\in\R$, we write
\[
   L^2_s(\R^n) = \{h\in\cS'(\R^n):\langle \xi \rangle^s h(\xi) \in L^2(\R^n)\}.
\]
The Sobolev spaces and local Sobolev spaces, respectively, are defined by
\[
   H^s(\R^n) = \{f\in\cS'(\R^n):  \widehat{f} \in L^2_s(\R^n)\}
\]
and
\[
   H^s_{\rm loc}(\R^n) = \{f\in\cS'(\R^n):  \psi f \in H^s(\R^n) \ {\rm for\ all\ } \psi\in\cD(\R^n)\}.
\]

The distribution $(x+\ii 0)^\lambda \in \cS'(\R)$ is defined as
\begin{equation}\label{eq:xplusi0}
    {(x+\ii 0)^\lambda = }\lim_{\eps\to 0}(x^2 + \eps^2)^{\lambda/2}\ee^{\ii \lambda \arg(x+\ii\eps)}.
\end{equation}
It is an entire function of $\lambda\in\C$, see e.g. \cite[Section I.3.6]{GelfandShilov:1964}. Its Fourier transform is given by
\[
\cF\big((x+\ii 0)^\lambda\big)(\xi) = \frac{(2\pi)^{-\lambda}}{\Gamma(-\lambda)}\ee^{\ii\lambda\pi/2}\xi_+^{-\lambda-1}
\]
for $\lambda \neq 0,1,2,3,\ldots$ \cite[Section II.2.3]{GelfandShilov:1964}, where $\xi_+^\mu$ is the pseudofunction as  defined in \cite[Section I.3.2]{GelfandShilov:1964}.

\begin{remark}\label{rem:HsPrpoertiesOfPseudofunctions}
The following properties are easy to show. We assume here that $\lambda < 0$ so that the pseudofunction $\xi_+^{-\lambda - 1}$ is locally integrable.
Then, for $\lambda < 0$, the following equivalences hold:
\begin{itemize}
\item[(a)]  $(x+\ii 0)^\lambda \in H^s_{\rm loc}(\R)\ \Leftrightarrow\ s < \lambda + \frac12$,
\item[(b)]  $(x+\ii 0)^\lambda \in H^s(\R)\ \Leftrightarrow\ \lambda < -\frac12 \ {\rm and\ }s < \lambda + \frac12$.
\end{itemize}
\end{remark}

\section{Multiplication of distributions}
\label{Sec:multiplication}

This section serves to recall the products of distributions which will be used to define the integer powers in the semilinear wave equation \eqref{eq:NLW}.

Let $S,T\in \cS'(\R^n)$. The $\cS'$-convolution of $S$ and $T$ is said to exist, if
\[
   (\varphi\ast S^-)T \in \cD_{L^1}'(\R^n),\quad {\rm for\ all}\quad \varphi\in \cS(\R^n),
\]
where $S^-(x) = S(-x)$. In this case, the convolution is defined by $\langle S\ast T,\varphi\rangle = \langle (\varphi\ast S^-)T, 1 \rangle$, and $S\ast T$ belongs to $\cS'(\R^n)$.

Let $u,v\in \cS'(\R^n)$. If the $\cS'$-convolution of $\cF u$ and $\cF v$ exists, one may define the \emph{Fourier product}
\begin{equation}\label{eq:FourierProduct}
   u\cdot v = \cF^{-1}(\cF u \ast \cF v).
\end{equation}
The definition can be localized as follows. Assume that for every $x\in\R^n$ there is a neighborhood $\Omega_x$ and $\chi_x\in\cD(\R^n)$, $\chi_x\equiv 1$ on $\Omega_x$, such that the $\cS'$-convolution of $\cF(\chi_x u)$ and $\cF(\chi_x v)$ exists. Locally near $x$, the product $u\cdot v$ is defined to be
$\cF^{-1}(\cF(\chi_x u) \ast \cF(\chi_x v))$. Globally, it is defined by a partition of unity argument.

\begin{remark}\label{rem:FourierProduct}
Here are some special cases in which the Fourier product exists.

(a) The existence of the $\cS'$-convolution of $S,T\in \cS'(\R^n)$ is guaranteed if both $S$ and $T$ have their support in a closed, acute and convex cone $\Gamma$ in $\R^n$. Further, $S\ast T$ is also supported in $\Gamma$, and the map $(S,T)\to S\ast T$ is separately continuous in $\cS'(\R^n)$ \cite[I.5.6, I.4.5]{Vladimirov:1981}. Let
\[
   \cS'_\Gamma(\R^n) = \{f\in \cS'(\R^n): \supp \widehat{f}\subset \Gamma\}.
\]
For $u,v\in \cS'_\Gamma(\R^n)$, the product $u\cdot v$ is thus definable by \eqref{eq:FourierProduct} and belongs to $\cS'_\Gamma(\R^n)$. Thus $\cS'_\Gamma(\R^n)$ forms an algebra with respect to multiplication, and the multiplication map is separately continuous.

(b) Let $u, v \in L^2(\R^n)$, $\varphi\in\cS(\R^n)$. Then $(\varphi\ast\widehat{u}^-)\widehat{v} \in L^1(\R^n) \subset \cD_{L^1}'(\R^n)$. A simple calculation shows that the $\cS'$-convolution $\widehat{u} \ast \widehat{v}$ exists and coincides with the ordinary convolution. By the exchange formula for $L^2$-functions, the Fourier product $u\!\cdot\! v$ coincides with the ordinary product of two $L^2$-functions. Using localization as indicated above, the same holds for the product of two $L^2_{\rm loc}$-functions. In particular, for $u,v \in H^s_{\rm loc}(\R^n)$ with $s > n/2$, the Fourier product exists and coincides with the product in the algebra $H^s_{\rm loc}(\R^n)$.

When $1\leq p < 2$, $2 < q \leq \infty$, $\frac1p + \frac1q = 1$, there are examples of $u\in L^p(\R)$, $v\in L^q(\R)$ whose Fourier product does not exist, as shown in the recent paper \cite{Ortner:2023}. However, if both the ordinary product and the Fourier product exist, they necessarily coincide.

(c) The product defined by H\"ormander's wave front set criterion \cite{Hoermander:1971}, requiring that for every
$(x,\xi)\in \R^n\times(\R^{n}\setminus\{0\})$, $(x,\xi)\in\WF(u)$ implies $(x,-\xi)\not\in\WF(v)$, is also a special case.
This can be seen by localizing the arguments establishing case (a), see e.g. \cite[Proposition 6.3]{MO:1992}.
\end{remark}

The remark shows that all products of functions and distributions occurring in this paper can be subsumed under the framework of the Fourier product.  Further details and a discussion of different products of distributions can be found in \cite{MO:1992}.

\begin{remark}\label{rem:xplusi0}
The distributions $(x+\ii 0)^\lambda$ in \eqref{eq:xplusi0} belong to $\cS'_\Gamma(\R)$ with $\Gamma = [0,\infty)$. Therefore, all integer powers exist both in the sense of the Fourier product and using H\"{o}rmander's wave front set criterion. Their Sobolev regularity is summarized in
Remark\;\ref{rem:HsPrpoertiesOfPseudofunctions}.
\end{remark}

\section{Anomalous propagation of singularities to 1D-semilinear wave equations}
\label{sec:stationary}

In this section, we consider the propagation of singularities for the semilinear wave equation \eqref{eq:NLW} in one dimension.
The following proposition exhibits explicit solutions with stationary singular support. They are used below to construct solutions whose singular support propagates in arbitrary noncharacteristic directions.

\begin{proposition}\label{prop:NLW1D}
For every $s<\frac12$ there are $\lambda<0$ and $p\in\N$ such that

(a) the distribution $u_0(x) = (x+\ii 0)^\lambda$ belongs to $H^s_{\rm loc}(\R)$, $\singsupp u_0 = \{0\}$, and

(b) $u(x,t)\equiv u_0(x)$ is a distributional solution to the semilinear wave equation
\begin{equation}\label{eq:NLW1D}
  \p_t^2 u - \p_x^2 u = -\lambda(\lambda -1)u^p,\quad u(x,0) = u_0(x), \ \p_t u(x,0) = 0
\end{equation}
where the nonlinear term is understood in the sense of the Fourier product. Its singular support is the noncharacteristic line $\{(0,t):t\in\R\}$.

Similarly, for any $1\neq p \in \mathbb{N}$ there are $\lambda$ and $s<\frac12$ such that $u(x,t)\equiv u_0(x)$ is a distributional solution of \eqref{eq:NLW}.
\end{proposition}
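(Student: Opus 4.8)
The plan is to exploit that a $t$-independent field $u(x,t)\equiv u_0(x)$ trivializes the hyperbolic part of \eqref{eq:NLW}: since $\p_t^2 u=0$ and $\p_t u(\cdot,0)=0$ automatically, the equation collapses to the purely spatial distributional identity $-\p_x^2 u_0 = \sigma\, u_0^p$, where $\sigma\in\{+1,-1\}$ is the sign chosen in \eqref{eq:NLW}. Thus everything reduces to producing a single distribution of the form $u_0 = a\,(x+\ii 0)^\lambda$ solving this stationary identity, and then reading off its Sobolev regularity and singular support. I would reuse verbatim the computation already underlying part (b); the only genuinely new point is the bookkeeping needed to reconcile the coefficient with the unit coefficient in \eqref{eq:NLW}.

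First I would record the two distributional computations. Differentiating the entire family gives $\p_x(x+\ii 0)^\lambda = \lambda (x+\ii 0)^{\lambda-1}$, with no spurious delta contribution at the origin because this is an identity of the holomorphic-in-$\lambda$ family \eqref{eq:xplusi0}; hence $\p_x^2(x+\ii 0)^\lambda = \lambda(\lambda-1)(x+\ii 0)^{\lambda-2}$. Second, the Fourier product of $(x+\ii 0)^\lambda$ with itself exists and satisfies the power law $\big[(x+\ii 0)^\lambda\big]^p = (x+\ii 0)^{p\lambda}$: existence follows from Remark~\ref{rem:xplusi0} and Remark~\ref{rem:FourierProduct}(a) because $\supp\cF\big((x+\ii 0)^\lambda\big)\subset[0,\infty)$, an acute convex cone; the identity I would verify by inserting the explicit transform from Section~\ref{sec:notations} and using the Beta-integral convolution $\xi_+^{a}\ast\xi_+^{b} = B(a+1,b+1)\,\xi_+^{a+b+1}$, whereupon the $\Gamma$-prefactors telescope to reproduce $\cF\big((x+\ii 0)^{p\lambda}\big)$. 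Comparing exponents in $-\p_x^2 u_0 = \sigma u_0^p$ forces $\lambda-2 = p\lambda$, i.e. $\lambda = -\tfrac{2}{p-1}$; note $\lambda<0$ since $p\geq 2$.

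With this $\lambda$ the two computations yield $-\p_x^2 (x+\ii 0)^\lambda = -\lambda(\lambda-1)\,\big[(x+\ii 0)^\lambda\big]^p$, and the prefactor $-\lambda(\lambda-1)$ is negative and in general not equal to $\pm1$. I would absorb it by the normalization $u_0 = a\,(x+\ii 0)^\lambda$: the identity $-\p_x^2 u_0 = \sigma u_0^p$ then reduces to $a^{p-1} = -\sigma\,\lambda(\lambda-1)$. Choosing the minus sign $\sigma=-1$ gives $a^{p-1} = \lambda(\lambda-1)>0$, which has the real positive root $a = \big(\lambda(\lambda-1)\big)^{1/(p-1)}$; hence $u_0 = a\,(x+\ii 0)^\lambda$ is a genuine distributional solution of \eqref{eq:NLW} with the minus sign. (The plus sign is handled identically on allowing a complex root of $a^{p-1}=-\lambda(\lambda-1)$, which is moreover real when $p$ is even.)

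Finally I would collect the remaining assertions. Scaling by the constant $a$ does not change the Sobolev class, so Remark~\ref{rem:HsPrpoertiesOfPseudofunctions}(a) gives $u_0\in H^s_{\rm loc}(\R)$ precisely for $s<\lambda+\tfrac12 = \tfrac12-\tfrac{2}{p-1}<\tfrac12$, so some admissible $s<\tfrac12$ exists as claimed. The initial data are $u(\cdot,0)=u_0$ and $\p_t u(\cdot,0)=0$ by construction, and since $(x+\ii 0)^\lambda$ is $\Cinf$ away from the origin and singular there for $\lambda<0$ non-integer, $\singsupp u = \{0\}\times\R = \{(0,t):t\in\R\}$, the noncharacteristic line. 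The step I expect to be the crux is the Fourier-product power law: it is exactly the fact that taking $p$-th powers keeps $u_0$ inside the one-parameter family $(x+\ii 0)^{\bullet}$ with the exponent merely multiplied by $p$ that makes the exponent-matching $\lambda-2=p\lambda$ possible and lets the stationary ansatz close; verifying it cleanly (existence via the cone support, together with the telescoping of the Gamma factors) is where the real work sits, the rest being normalization and bookkeeping.
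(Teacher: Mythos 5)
Your proof is correct and follows essentially the same route as the paper: the stationary ansatz reduces the problem to the spatial identity $\p_x^2 u_0=\lambda(\lambda-1)u_0^p$, the exponent matching $\lambda-2=p\lambda$ gives $\lambda=\frac{2}{1-p}$, and Remark \ref{rem:HsPrpoertiesOfPseudofunctions} yields $u_0\in H^s_{\rm loc}(\R)$ exactly for $s<\frac12-\frac{2}{p-1}$; your additional verifications (the Beta-integral power law $\big[(x+\ii 0)^\lambda\big]^p=(x+\ii 0)^{p\lambda}$ and the normalization of the coefficient to $\pm1$) correspond to what the paper delegates to Remark \ref{rem:xplusi0} and to the one-line remark immediately following the proposition. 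Two trivial touch-ups: state the final quantifier in the intended order (given $s<\frac12$, choose $p$ so large that $s<\frac12-\frac{2}{p-1}$, as in the paper's ``let $p\to\infty$''), and drop the ``non-integer'' caveat in the singular-support step, since $(x+\ii 0)^\lambda$ is singular at the origin for every $\lambda<0$ (which matters for $p=2,3$, where $\lambda=-2,-1$).
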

Multiplying the solution $u$ of \eqref{eq:NLW1D} by a constant, we obtain a corresponding solution of \eqref{eq:NLW}.

The special solutions exhibited here are self-similar solutions to the semilinear wave equation. However, they do not belong to the classes of functions considered e.g.~in
\cite{Bizon:2007,Kato:2007,Pecher:2000a,Pecher:2000,Ribaud:2002}.

\begin{proof}[Proof of Proposition \ref{prop:NLW1D}]
The function $\C\to\cS'(\R)$, $\lambda \to (x+\ii 0)^\lambda$ is analytic and it is well-known that $\frac{\dd^2}{\dd x^2}(x+\ii 0)^\lambda = \lambda(\lambda -1)(x+\ii 0)^{\lambda-2}$. The support of its Fourier transform is $[0,\infty)$, so all integer powers make sense by means of the Fourier product. Further,
\[
   (x+\ii 0)^{\lambda-2} = (x+\ii 0)^{\lambda p}
\]
provided $\lambda = \frac2{1-p}$. Noting that $\lambda < 0$, Remark \ref{rem:HsPrpoertiesOfPseudofunctions} shows that $(x+\ii 0)^{\lambda}$ belongs to $H^s_{\rm loc}(\R)$ iff $s < \lambda + \frac12 = \frac2{1-p} + \frac12$. Let $p\to\infty$ to produce the desired $s$.

Finally, if $1\neq p \in \mathbb{N}$, setting $\lambda = \frac2{1-p}$ produces a solution in $H^s_{\rm loc}(\R)$ for $s < \frac12 - \frac{2}{p-1}$.

\end{proof}

\begin{remark} (a) Anomalous propagation of singularities. When $s>\frac12$, equation \eqref{eq:NLW1D} with initial data in $H^s(\R)\times H^{s-1}(\R)$ would have a unique solution which belongs to $\cC^0([-T,T]:H^s(\R^n)) \subset L^\infty(\R\times[-T,T])$, so the anomalous singular support of the solution from Proposition \ref{prop:NLW1D} would be ruled out by the results in \cite{RauchReed:1980,RauchReed:1982}.

(b) Critical exponents. By Remark \ref{rem:HsPrpoertiesOfPseudofunctions}, $(x+\ii 0)^{\lambda}$ with $\lambda = \frac2{1-p}$ actually belongs to $H^s(\R)$ for $p = 2$ and $p=3$,
where $s < \frac2{1-p} + \frac12$. However, this is outside the range of wellposedness given by Theorem \ref{thm:1DNLW} and, furthermore, below $s_{\rm sob}$ in \eqref{eq:sob}.
\end{remark}

Using certain Lorentz transformations, it is possible to transform the stationary solutions $u_0(x) = (x+\ii 0)^\lambda$ from Proposition \ref{prop:NLW1D}
to time dependent solutions with singular support on noncharacteristic rays. Starting with the case $n=1$, the transformation
\[
  \begin{bmatrix}x\\ t\end{bmatrix} \to L \begin{bmatrix}x\\ t\end{bmatrix}, \quad L = \begin{bmatrix}\cosh\theta & \sinh\theta\\ \sinh\theta & \cosh\theta\end{bmatrix}
\]
keeps the quadratic form $x^2 - t^2$ invariant, while the transformation
\[
  \begin{bmatrix}x\\ t\end{bmatrix} \to L' \begin{bmatrix}x\\ t\end{bmatrix}, \quad L' = \begin{bmatrix}\sinh\theta & \cosh\theta\\ \cosh\theta & \sinh\theta\end{bmatrix}
\]
keeps the quadratic form $t^2 - x^2$ invariant. Therefore, if $u(x,t)$ solves
\begin{equation}\label{eq:Lorentz}
  \p_t^2 u - \p_x^2 u = f(u),
\end{equation}
then $v(x,t) = u \circ L(x,t)$ and $w(x,t) = u \circ L'(x,t)$ solve
\begin{equation*}
  \p_t^2 v - \p_x^2 v = -f(v),\qquad \p_t^2 w - \p_x^2 w = +f(w),
\end{equation*}
respectively. In particular, if $u(x,t) \equiv u_0(x)$ is a stationary solution to \eqref{eq:Lorentz} (with $\p_t u(x,0) = 0$), then
$v(x,t) = u_0(x\cosh\theta + t\sinh\theta)$ solves the same equation with a sign change and with initial data
\[
v(x,0) = u_0(x\cosh\theta),\quad \p_t v(x,0) = \sinh\theta\, u_0'(x\cosh\theta)
\]
while $w(x,t) = u_0(x\sinh\theta + t\cosh\theta)$ solves \eqref{eq:Lorentz} with initial data
\[
w(x,0) = u_0(x\sinh\theta),\quad \p_t v(x,0) = \cosh\theta\, u_0'(x\sinh\theta).
\]
Suppose now that $u_0(x)$ has singular support equal to $x = 0$. The reparametrization
\[
    \cosh\theta = \frac{1}{\sqrt{1-c^2}},\quad \sinh\theta = \frac{c}{\sqrt{1-c^2}}
\]
with $|c| < 1$ leads to
\[
   v(x,t) = u_0\Big(\frac{x+ct}{\sqrt{1-c^2}}\Big)
\]
which has its singular support along the line $\{x + ct= 0, t\in\R\}$, that is, inside the light cone, while the singular support of the initial data is still $\{x=0\}$.
Similarly, the reparametrization
\[
    \cosh\theta = \frac{c}{\sqrt{c^2-1}},\quad \sinh\theta = \pm\frac{1}{\sqrt{c^2-1}}
\]
with $c > 1$ leads to
\[
   w(x,t) = u_0\Big(\frac{\pm x + ct}{\sqrt{c^2-1}} \Big)
\]
which has its singular support along the line $\{x \pm ct= 0, t\in\R\}$, that is, outside the light cone. In conclusion, the stationary solutions from Proposition \ref{prop:NLW1D} can be transformed to nonstationary solutions with
singular support on any ray off the light cone.

\section{Special products of distributions}

The subsequent analysis requires refined estimates for products of Sobolev functions whose Fourier transform is supported in $\Gamma = [0,\infty)\subset \R$. For later reference we present results for $\R^n$.

 Let $\Gamma$ be a closed, acute, convex cone in $\R^n$ and $s\in\R$. Notation:
\[
   H_\Gamma^s(\R^n) = \{f\in H^s(\R^n): \supp \widehat{f}\subset \Gamma\} = \cS'_\Gamma(\R^n) \cap H^s(\R^n).
\]
Note that the solutions given in Section\;\ref{sec:stationary} locally belong to $H_\Gamma^s(\R)$ at any fixed time $t$, for $\Gamma = [0,\infty)$.

The product of two members $f\in H_\Gamma^{s_1}(\R^n)$ and $g\in H_\Gamma^{s_2}(\R^n)$ is understood in the sense of the Fourier product.

\begin{proposition}\label{prop:product}
(a) Let $s_1\leq 0$, $s_2\leq \frac{n}{2}$ and $f\in H_\Gamma^{s_1}(\R^n)$, $g\in H_\Gamma^{s_2}(\R^n)$. Then $fg\in H_\Gamma^{\sigma}(\R^n)$ for $\sigma < -\frac{n}{2} + s_1 + s_2$.

(b) Let $s_1\geq 0$, $s_2\in\R$ and $f\in H_\Gamma^{s_1}(\R^n)$, $g\in H_\Gamma^{s_2}(\R^n)$. Then $fg\in  H_\Gamma^{\sigma}(\R^n)$ for $\sigma \leq s_1$, $\sigma < -\frac{n}{2} + s_2$.

In both cases, $\|fg\|_{H^\sigma}\leq \|f\|_{H^{s_1}} \|g\|_{H^{s_2}}$ for some constant $C>0$.
\end{proposition}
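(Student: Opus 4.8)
The plan is to reduce the claim to a bilinear $L^2$-estimate for a Fourier multiplier and then to exploit the acuteness of $\Gamma$. Write $F=\cF f$ and $G=\cF g$, both supported in $\Gamma$. Since $\Gamma$ is convex with $\Gamma+\Gamma\subset\Gamma$, Remark \ref{rem:FourierProduct}(a) gives that the Fourier product $fg$ exists with $\cF(fg)=F\ast G$ supported in $\Gamma$, so $fg\in\cS'_\Gamma(\R^n)$. Putting $u=\langle\cdot\rangle^{s_1}F$ and $v=\langle\cdot\rangle^{s_2}G$, which lie in $L^2(\R^n)$ and are supported in $\Gamma$, the asserted bound $\|fg\|_{H^\sigma}\leq C\|f\|_{H^{s_1}}\|g\|_{H^{s_2}}$ is equivalent to
\[
  \Big\|\int K(\xi,\eta)\,u(\eta)\,v(\xi-\eta)\,\dd\eta\Big\|_{L^2_\xi}\leq C\,\|u\|_{L^2}\,\|v\|_{L^2},
\]
with multiplier $K(\xi,\eta)=\langle\xi\rangle^\sigma\langle\eta\rangle^{-s_1}\langle\xi-\eta\rangle^{-s_2}$; the pointwise bound below will show that the inner integral converges for almost every $\xi$, so it indeed represents $F\ast G$.

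The geometric input is the acuteness of $\Gamma$: there are a unit vector $e$ and $\delta>0$ with $\langle x,e\rangle\geq\delta|x|$ for all $x\in\Gamma$. For $\eta,\zeta\in\Gamma$ this yields $|\eta+\zeta|\geq\langle\eta+\zeta,e\rangle\geq\delta(|\eta|+|\zeta|)$, which with the triangle inequality gives the key comparison $\langle\eta+\zeta\rangle\asymp\max(\langle\eta\rangle,\langle\zeta\rangle)$ on $\Gamma\times\Gamma$. I would split the $\eta$-integral at $\langle\eta\rangle=\langle\xi-\eta\rangle$ into $R_1=\{\langle\eta\rangle\geq\langle\xi-\eta\rangle\}$ and $R_2=\{\langle\xi-\eta\rangle\geq\langle\eta\rangle\}$. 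Writing $\xi=\eta+(\xi-\eta)$ with both summands in $\Gamma$, on $R_1$ we have $\langle\xi\rangle\asymp\langle\eta\rangle$, whence $K\lesssim\langle\eta\rangle^{\sigma-s_1}\langle\xi-\eta\rangle^{-s_2}$, and symmetrically $K\lesssim\langle\xi-\eta\rangle^{\sigma-s_2}\langle\eta\rangle^{-s_1}$ on $R_2$.

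The decisive manoeuvre is a power trade. On $R_1$ one has $\langle\xi-\eta\rangle\leq\langle\eta\rangle$, hence $\langle\eta\rangle^{-\gamma}\leq\langle\xi-\eta\rangle^{-\gamma}$ for every $\gamma\geq0$, so that $K\lesssim\langle\eta\rangle^{\alpha}\langle\xi-\eta\rangle^{\beta}$ with $\alpha=\sigma-s_1+\gamma$ and $\beta=-s_2-\gamma$. I would pick $\gamma\geq0$ with $\tfrac{n}{2}-s_2<\gamma\leq s_1-\sigma$, so that $\alpha\leq0$ and $\beta<-\tfrac{n}{2}$. The contribution of $R_1$ is then dominated pointwise by $(P\ast Q)(\xi)$ with $P=\langle\cdot\rangle^{\alpha}|u|$ and $Q=\langle\cdot\rangle^{\beta}|v|$, and Young's inequality gives
\[
  \|P\ast Q\|_{L^2}\leq\|P\|_{L^2}\,\|Q\|_{L^1}\leq\|u\|_{L^2}\,\|v\|_{L^2}\Big(\int\langle\zeta\rangle^{2\beta}\,\dd\zeta\Big)^{1/2},
\]
using $\alpha\leq0$ for $\|P\|_{L^2}\leq\|u\|_{L^2}$ and Cauchy--Schwarz together with $\beta<-\tfrac{n}{2}$ for the finiteness of $\|Q\|_{L^1}$. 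By the symmetry $(\eta,s_1)\leftrightarrow(\xi-\eta,s_2)$, the region $R_2$ is treated identically, now requiring $\gamma'\geq0$ with $\tfrac{n}{2}-s_1<\gamma'\leq s_2-\sigma$. A direct check shows that the hypotheses of (a) (using $s_1\leq0$, $s_2\leq\tfrac{n}{2}$ and $\sigma<-\tfrac{n}{2}+s_1+s_2$) and of (b) (using $s_1\geq0$, $\sigma\leq s_1$ and $\sigma<-\tfrac{n}{2}+s_2$) each guarantee the existence of both $\gamma$ and $\gamma'$, which finishes the proof.

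The one genuinely delicate point, and the place where the cone is indispensable, is the endpoint $\sigma=s_1$ allowed in case (b). There the crude estimate $\langle\eta\rangle^{\sigma-s_1}\leq1$ leaves no decay in $\eta$ on $R_1$, so Young's inequality cannot be applied directly; the power trade saves the argument by shifting the missing decay onto $\langle\xi-\eta\rangle$, which is permissible exactly because $\langle\xi-\eta\rangle\leq\langle\eta\rangle$ there, and the resulting $L^1$-weight is integrable because $\sigma<-\tfrac{n}{2}+s_2$ with $\sigma=s_1\geq0$ forces $s_2>\tfrac{n}{2}$. Without acuteness one would retain only $\langle\xi\rangle\lesssim\langle\eta\rangle+\langle\xi-\eta\rangle$ with no matching lower bound; the comparison $\langle\xi\rangle\asymp\max(\langle\eta\rangle,\langle\xi-\eta\rangle)$ would fail and the estimate would revert to the weaker classical multiplication law. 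The remaining effort is the elementary bookkeeping of the strict versus non-strict inequalities and of the subcases given by the sign of $s_2-\tfrac{n}{2}$, after which each region reduces to the single embedding $L^2\ast L^1\hookrightarrow L^2$.
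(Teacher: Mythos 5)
Your argument is correct and reaches exactly the ranges of $\sigma$ asserted in the proposition, but it is organised quite differently from the paper's proof, so a comparison is worthwhile. The paper first reduces a general acute cone to the positive coordinate cone by a rotation--dilation--rotation, where the lower bound $\langle\xi+\eta\rangle\geq\max(\langle\xi\rangle,\langle\eta\rangle)$ holds with constant $1$; it then applies a \emph{single} global estimate -- Minkowski's integral inequality, which is just the $L^2\ast L^1\hookrightarrow L^2$ Young inequality you invoke -- with an \emph{asymmetric} weight split $\langle\xi+\eta\rangle^{\sigma}\leq\langle\xi\rangle^{s_1}\langle\eta\rangle^{\sigma-s_1}$ (case (a)), resp.\ $\leq\langle\xi\rangle^{s_1}\langle\eta\rangle^{\sigma}$ via Peetre's inequality for the positive power $s_1$ (case (b)), so that the $L^1$-integrability burden always lands on the $g$-factor and no case distinction on regions is needed. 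You instead work directly on the general acute cone, extracting the two-sided comparison $\langle\eta+\zeta\rangle\asymp\max(\langle\eta\rangle,\langle\zeta\rangle)$ from a vector $e$ in the interior of the dual cone, and then perform a symmetric high--low decomposition $R_1\cup R_2$, placing the $L^1$-weight on whichever factor carries the lower frequency; the ``power trade'' and the verification that admissible $\gamma,\gamma'$ exist (including the endpoint $\sigma=s_1$ in (b)) are correct as you state them. What each approach buys: the paper's normalisation makes the pointwise inequalities exact and the computation essentially one line, at the price of hiding the geometry in the preliminary reduction; your version keeps the role of acuteness explicit, avoids the reduction step, and matches the standard paraproduct heuristic, at the price of the region and exponent bookkeeping. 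Since both proofs ultimately rest on the same two ingredients -- the cone lower bound on $\langle\xi+\eta\rangle$ and the embedding $L^2\ast L^1\hookrightarrow L^2$ -- I would describe yours as a legitimate reorganisation rather than a new method; it is complete as written.
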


\begin{proof}
(1) Assume first that $\Gamma$ is the positive coordinate cone
\[
   \Gamma = \{\xi\in\R^n: \xi_i\geq 0, i = 1,\ldots, n\}.
\]
Write $\int_0^\xi$ for the $n$-dimensional integral $\int_{\xi_1}\ldots \int_{\xi_n}$ etc.
The proof of (a) starts with Minkowski's inequality for integrals and the observation that
\[
   \bone_{[0,\xi]}(\eta) = \bone_{[\eta,\infty)}(\xi)
\]
holds for the characteristic functions of the indicated $n$-dimensional intervals. Thus
\begin{eqnarray*}
\|\widehat{f}\ast\widehat{g}\|_{L^2_\sigma}
& = & \left(\int_0^\infty\Big\vert\int_0^\xi \widehat{f}(\xi - \eta)\widehat{g}(\eta)\dd\eta\Big\vert^2\langle\xi\rangle^{2\sigma}\dd\xi\right)^{1/2}\\
& = &  \left(\int_0^\infty\Big\vert\int_0^\infty \bone_{[0,\xi]}(\eta) \widehat{f}(\xi - \eta)\widehat{g}(\eta)\langle\xi\rangle^{\sigma}\dd\eta\Big\vert^2\dd\xi\right)^{1/2}\\
&\leq & \int_0^\infty\left(\int_0^\infty \bone_{[0,\eta]}(\xi)|\widehat{f}(\xi - \eta)|^2|\widehat{g}(\eta)|^2\langle\xi\rangle^{2\sigma}\dd\xi\right)^{1/2}\dd\eta\\
&=& \int_0^\infty\left(\int_\eta^\infty |\widehat{f}(\xi-\eta)|^2|\widehat{g}(\eta)|^2\langle\xi\rangle^{2\sigma}\dd\xi\right)^{1/2}\dd\eta\\
&=& \int_0^\infty\left(\int_0^\infty |\widehat{f}(\xi)|^2|\widehat{g}(\eta)|^2\langle\xi+\eta\rangle^{2\sigma}\dd\xi\right)^{1/2}\dd\eta.
\end{eqnarray*}
For $\xi\geq 0$, $\eta\geq 0$ and $s_1\leq 0$, $\sigma- s_1\leq 0$ (which holds for the $\sigma$ under consideration provided $s_2\leq\frac{n}{2}$) one has
\[
  \langle\xi+\eta\rangle^{2\sigma} = \langle\xi+\eta\rangle^{2s_1}\langle\xi+\eta\rangle^{2\sigma- 2s_1}
    \leq \langle\xi\rangle^{2s_1}\langle\eta\rangle^{2\sigma- 2s_1}
     =\langle\xi\rangle^{2s_1}\langle\eta\rangle^{2s_2 + 2\sigma- 2s_1- 2s_2}.
\]
Thus
\begin{eqnarray*}
\|\widehat{f}\ast\widehat{g}\|_{L^2_\sigma} &\leq &
    \left(\int_0^\infty |\widehat{f}(\xi)|^2\langle\xi\rangle^{2s_1}\dd\xi\right)^{1/2}
         \int_0^\infty |\widehat{g}(\eta)|\langle\eta\rangle^{s_2}\langle\eta\rangle^{\sigma - s_1 - s_2}\dd\eta\\
&\leq & \|f\|_{H^{s_1}} \|g\|_{H^{s_2}}\left(\int_0^\infty \langle\eta\rangle^{2\sigma- 2s_1- 2s_2}\dd\eta\right)^{1/2}.
\end{eqnarray*}
The latter integral is finite for $\sigma < -\frac{n}{2} + s_1 + s_2$.

In the situation (b), we estimate, using that $s_1\geq 0$ and $\sigma - s_1 \leq 0$,
\[
  \langle\xi+\eta\rangle^{2\sigma} = \langle\xi+\eta\rangle^{2s_1}\langle\xi+\eta\rangle^{2\sigma- 2s_1}
    \leq \langle\xi\rangle^{2s_1}\langle\eta\rangle^{2s_1}\langle\eta\rangle^{2\sigma- 2s_1}
     =\langle\xi\rangle^{2s_1}\langle\eta\rangle^{2s_2 + 2\sigma- 2s_2}.
\]
This time using H\"older's inequality we get
\begin{eqnarray*}
\|\widehat{f}\ast\widehat{g}\|_{L^2_\sigma}
&\leq &
    \left(\int_0^\infty |\widehat{f}(\xi)|^2\langle\xi\rangle^{2s_1}\dd\xi\right)^{1/2}
         \int_0^\infty |\widehat{g}(\eta)|\langle\eta\rangle^{s_2}\langle\eta\rangle^{\sigma - s_2}\dd\eta\\
&\leq & \|f\|_{H^{s_1}} \|g\|_{H^{s_2}}\left(\int_0^\infty \langle\eta\rangle^{2\sigma- 2s_2}\dd\eta\right)^{1/2},
\end{eqnarray*}
which is finite since $\sigma < -\frac{n}{2} + s_2$.

(2) If $\Gamma$ is an arbitrary convex cone, note that if the supports of $\widehat{f}$ and $\widehat{g}$ are contained in $\Gamma$, so is the support of $\widehat{f}\ast\widehat{g}$. Since $\Gamma$ is acute, one may assume (after rotation) that $\Gamma \subset \{(\xi',\xi_n)\in\R^n:\xi_n \geq \alpha|\xi'|\}$ for some $\alpha > 0$; here $\xi' = (\xi_1,\ldots,\xi_{n-1})$. Using a dilation $\widetilde{\xi_n} = \beta\xi_n$, $\widetilde{\xi'} = \xi'$ one may make the opening angle arbitrarily small. After a further rotation, one may assume that
$\Gamma \subset \{\xi\in\R^n: \xi_i\geq 0, i = 1,\ldots, n\}$, that is case (1). Rotations and dilations do not change $H^s(\R^n)$.
\end{proof}
\begin{remark}
The estimates in (a) are sharp. For example, when $n = 1$, $s\leq 0$ and $f\in H_\Gamma^{s}(\R)$, (a) implies that $f^2\in H_\Gamma^{\sigma}(\R)$ for $\sigma < -\frac12 + 2s$. The bounds are realized by $f(x) = (x+\ii 0)^{-1}$, which belongs to $H_\Gamma^{s}(\R)$ if and only if $s < - \frac12$, while $f^2(x) = (x+\ii 0)^{-2}$ belongs to $H_\Gamma^{\sigma}(\R)$ if and only if $\sigma < - \frac32$ as predicted.

The estimates in (b) are not sharp. For example, if $s=s_1=s_2 > \frac{n}2$, $fg$ is known to belong to $H_\Gamma^{s}(\R^n)$, while (b) only predicts $fg\in H_\Gamma^{\sigma}(\R)$ for $\sigma < -\frac{n}2 + s$.

On the other hand, if $f$ and $g$ merely belong to $H^{s_1}(\R^n)$ and $H^{s_2}(\R^n)$, then $fg$ are only known to belong to $H^\sigma(\R^n)$ for $\sigma < -\frac{n}2 + s_1 + s_2$ under the condition that $s_1 + s_2 \geq 0$
\cite[Theorem 8.2.1]{Hoermander:1997}, see also \cite[Lemma 1.3]{Beals:1985} and \cite[formula (1.5)]{Beals:1989}.
\end{remark}

In view of the intended application to the semilinear wave equation \eqref{eq:NLW} we now assume that
\[
   u \in H^s_\Gamma(\R^n)
\]
where $\Gamma$ is a cone as above. Let $p$ be a positive integer. We wish to determine ranges for $\sigma$ such that $u^p \in H^\sigma_\Gamma(\R^n)$.
\begin{remark}\label{rem:IntegerPowers}
(Sobolev properties of integer powers)

\underline{The case $s\leq 0$}. Here
\[
   u^p \in H^\sigma_\Gamma(\R^n) \quad {\rm for}\quad \sigma < -\frac{n}2(p-1) + ps.
\]

\underline{The case $0 < s\leq \frac{n}2$}. Here Proposition \ref{prop:product}(a) immediately gives that
\[
   u^p \in H^\sigma_\Gamma(\R^n) \quad {\rm for}\quad \sigma < -\frac{n}2(p-1) + (p-1)s = (p-1)(s -\frac{n}2).
\]
This follows by induction, using Proposition \ref{prop:product}(a) and (b). Indeed, $u^2 \in H^\sigma_\Gamma(\R^n)$ for $\sigma < -\frac{n}2 + s$ by item (b). For $p = 3$ we use item (a) with $s_1 = -\frac{n}2 + s$, $s_2 = s$ to obtain $u^3 \in H^\sigma_\Gamma(\R^n)$ for $\sigma < -n + 2s$. For $p = 4$ we use again item (a) with $s_1 = -n + 2s$, $s_2 = s$ to obtain $u^4 \in H^\sigma_\Gamma(\R^n)$ for $\sigma < -\frac32 n + 3s$, and so on.

Note that in particular for $s = \frac{n}2$, $u^p \in H^\sigma_\Gamma(\R^n)$ for all $p$ and $\sigma < 0$.

\underline{The case $s > \frac{n}2
$}. Here $u^p \in H^s_\Gamma(\R^n)$ for all $p$ since the latter space is an algebra.

In all cases, $\|u^p\|_{H^\sigma} \leq C\|u\|^p_{H^s}$ for some constant $C>0$.
\end{remark}

The application of Proposition \ref{prop:product} does not produce new estimates for the power function on $H^s_\Gamma(\R^n)$ for $n\geq 2$ and $s\leq \frac{n}2$. We henceforth concentrate on the case $n=1$. In the context of the semilinear wave equation, the question arises whether $u\in H^s_\Gamma(\R)$ implies $u^p\in H^{s-1}_\Gamma(\R)$, that is, whether the $\sigma$ in Remark \ref{rem:IntegerPowers} can attain a value $\geq s-1$. The answer is summarized in the following remark, which is of interest when $s\leq \frac12$.
\begin{remark}\label{rem:stosminus1}
Suppose that $u\in H^s_\Gamma(\R)$, where $\Gamma$ is a closed half-ray. Then $u^p\in H^{s-1}_\Gamma(\R)$ in the following cases:
\begin{equation}\label{eq:rangesPS}
\begin{array}{ll}
p = 2, & -\frac12 < s < \infty,\\[8pt]
p\geq 3, & \frac{1}{2}-\frac{1}{2p-4} < s < \infty.
\end{array}
\end{equation}
In all cases, $\|u^p\|_{H^{s-1}} \leq C\|u\|^p_{H^s}$ for some constant $C>0$.
\end{remark}

\section{Application to 1D-semilinear wave equations}
\label{sec:1DNLW}

In this section, we address wellposedness in $H_\Gamma^s$ of the Cauchy problem for the semilinear wave equation
\begin{equation}\label{eq:1DNLW}
  \p_t^2 u - \p_x^2 u = \pm u^p,\quad u(x,0) = u_0(x), \ \p_t u(x,0) = u_1(x)
\end{equation}
in one space dimension; here $p\geq 2$ is a positive integer. We denote by $\Gamma \subset \R$ a closed half-ray which may be assumed to be the half-line $[0,\infty)$.
\begin{theorem}\label{thm:1DNLW}
Assume that $p$ and $s$ are in the range given by \eqref{eq:rangesPS}. Let $u_0\in H^{s}_\Gamma(\R)$, $u_1\in H^{s-1}_\Gamma(\R)$. Then there is $T > 0$ such that problem \eqref{eq:1DNLW} has a unique distributional solution in
$\cC([-T,T]:H^s_\Gamma(\R))\cap \cC^1([-T,T]:H^{s-1}_\Gamma(\R))$. Further, the map $(u_0, u_1)\to u$ is locally Lipschitz continuous.
\end{theorem}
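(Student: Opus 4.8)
\section*{Proof proposal}

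The plan is to recast \eqref{eq:1DNLW} as a fixed-point problem through the Duhamel formula and to solve it by a contraction argument in the Banach space $X_T = \cC([-T,T]:H^s_\Gamma(\R))$. Writing the linear wave propagators as the Fourier multipliers $\cos(t|D|)$ and $\frac{\sin(t|D|)}{|D|}$, whose symbols on $\Gamma=[0,\infty)$ are $\cos(2\pi t\xi)$ and $\frac{\sin(2\pi t\xi)}{2\pi\xi}$, any solution should satisfy
\[
   u(t) = \cos(t|D|)\,u_0 + \frac{\sin(t|D|)}{|D|}\,u_1 \pm \int_0^t \frac{\sin((t-\tau)|D|)}{|D|}\, u(\tau)^p\,\dd\tau =: \Phi(u)(t).
\]
The first observation is that every operator appearing here is a Fourier multiplier and hence preserves the support condition $\supp\widehat{\,\cdot\,}\subset\Gamma$; together with the fact that $u\mapsto u^p$ maps $\cS'_\Gamma(\R)$ into itself (Remark \ref{rem:FourierProduct}(a)), this shows that $\Phi$ leaves the closed subspace $X_T$ invariant, so the whole argument can be run inside the $\Gamma$-spaces without ever leaving them.

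Next I would record the two linear bounds. Since $|\cos(2\pi t\xi)|\leq 1$, the operator $\cos(t|D|)$ is bounded on $H^s_\Gamma(\R)$ uniformly in $t$; and since $\big|\tfrac{\sin(2\pi t\xi)}{2\pi\xi}\big|\leq\min(|t|,(2\pi\xi)^{-1})$, the quantity $\langle\xi\rangle\,\tfrac{\sin(2\pi t\xi)}{2\pi\xi}$ is bounded by $C(1+|t|)$, whence $\frac{\sin(t|D|)}{|D|}$ maps $H^{s-1}_\Gamma(\R)\to H^s_\Gamma(\R)$ with norm $\leq C(1+|t|)$. The nonlinear input is Remark \ref{rem:stosminus1}: for $(p,s)$ in the range \eqref{eq:rangesPS} one has $\|u^p\|_{H^{s-1}}\leq C\|u\|_{H^s}^p$. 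Combining these, the Duhamel term is controlled by $C(1+T)\int_0^{|t|}\|u(\tau)\|_{H^s}^p\,\dd\tau\leq C(1+T)\,T\,\|u\|_{X_T}^p$, so that
\[
   \|\Phi(u)\|_{X_T} \leq C\big(\|u_0\|_{H^s}+\|u_1\|_{H^{s-1}}\big) + C(1+T)\,T\,\|u\|_{X_T}^p.
\]
Choosing $R = 2C(\|u_0\|_{H^s}+\|u_1\|_{H^{s-1}})$ and then $T$ small (depending only on $R$), $\Phi$ maps the ball $\{\|u\|_{X_T}\leq R\}$ into itself.

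For the contraction and the continuous dependence, the key is a difference estimate $\|u^p-v^p\|_{H^{s-1}}\leq C(\|u\|_{H^s}^{p-1}+\|v\|_{H^s}^{p-1})\|u-v\|_{H^s}$. I would derive it from the factorization $u^p-v^p=(u-v)\sum_{j=0}^{p-1}u^j v^{p-1-j}$ and the multilinear form of the product estimates: each summand is a product of $p$ factors in $H^s_\Gamma(\R)$, and the iterated application of Proposition \ref{prop:product}(a),(b) underlying Remark \ref{rem:stosminus1} is bilinear, hence applies verbatim to products of distinct factors. This is the one point that deserves care, since Remark \ref{rem:stosminus1} is stated only for the pure power: one must check that the same chain of indices $s_1,s_2$ in the induction remains admissible when the factors differ. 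Granting it, $\|\Phi(u)-\Phi(v)\|_{X_T}\leq C(1+T)\,T\,(2R)^{p-1}\|u-v\|_{X_T}$, a contraction for $T$ small, and Banach's theorem yields a unique fixed point in the ball; uniqueness in the full space and local Lipschitz dependence of $(u_0,u_1)\mapsto u$ follow from the same linear and difference estimates applied to two sets of data.

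It remains to upgrade the fixed point to a genuine solution with the asserted time regularity. Differentiating the Duhamel formula gives
\[
   \p_t u(t) = -|D|\sin(t|D|)\,u_0 + \cos(t|D|)\,u_1 \pm \int_0^t \cos((t-\tau)|D|)\, u(\tau)^p\,\dd\tau,
\]
the boundary term vanishing because $\sin 0 = 0$; as $\langle\xi\rangle^{-1}\,2\pi\xi\sin(2\pi t\xi)$ is bounded, the first term lies in $H^{s-1}_\Gamma(\R)$, and the remaining terms are controlled by the $\cos$-propagator bound on $H^{s-1}_\Gamma(\R)$ together with $\|u^p\|_{H^{s-1}}$, so $\p_t u\in\cC([-T,T]:H^{s-1}_\Gamma(\R))$, continuity in $t$ coming from strong continuity of the multiplier families. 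A second differentiation in $\cD'$ then verifies $\p_t^2 u - \p_x^2 u = \pm u^p$, all terms being individually interpreted in $H^{s-2}_\Gamma(\R)$ while their combination lies in $H^{s-1}_\Gamma(\R)$. The main obstacle throughout is not the functional-analytic scheme, which is the standard contraction argument, but the availability and the multilinear robustness of the gain-of-regularity estimate of Remark \ref{rem:stosminus1}: it is precisely the support condition $\supp\widehat{u}\subset\Gamma$ that lets $u\mapsto u^p$ lose only one derivative down to $s>\tfrac12-\tfrac1{2p-4}$, well below the classical threshold, and this is what makes the fixed-point argument close in the stated range.
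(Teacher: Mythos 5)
Your proposal is correct and follows essentially the same route as the paper: Duhamel formula, contraction mapping in $\cC([-T,T]:H^s_\Gamma(\R))$, the nonlinear estimate of Remark \ref{rem:stosminus1} for the self-mapping bound, and the factorization of $u^p-v^p$ together with the bilinearity of Proposition \ref{prop:product} for the contraction and Lipschitz dependence. The one point you flag as needing care (multilinear robustness of the product estimates for distinct factors) is exactly how the paper disposes of the case $p\geq 3$.
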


\begin{proof}
Let $E(t,\cdot) = \cF^{-1}\Big(\frac{\sin t|\xi|}{|\xi|}\Big)$ and
\[
\Big(\cM u\big)(t) = \frac{\dd}{\dd t}E(t)\ast u_0 + E(t)\ast u_1 + \int_0^t E(t-\tau)\ast u^p(\tau)\dd\tau.
\]
The goal is to construct a fixed point $u = \cM u$ in the ball
\[
\cB_T = \{u\in\cC([-T,T]:H^s_\Gamma(\R)): \sup_{-T\leq t \leq T}\|u(t) - \tfrac{\dd}{\dd t}E(t)\ast u_0 - E(t)\ast u_1\|_{H^s(\R)}\leq 1\}
\]
for small $T$. Note that (for $t\geq 0$)
\[
\widehat{\cM u}(\xi,t) = \cos (t|\xi|)\widehat{u_0}(\xi) + \frac{\sin t|\xi|}{|\xi|}\widehat{u_1}(\xi) + \int_0^t \frac{\sin (t-\tau)|\xi|}{|\xi|}\widehat{u}^{\ast p}(\xi,\tau)\dd\tau
\]
from where
\[
\|\cM u(t)\|_{H^s(\R)} \leq \|u_0\|_{H^s(\R)} + \|u_1\|_{H^{s-1}(\R)} + C\int_0^t \|u^p(\tau)\|_{H^{s-1}(\R)}\dd\tau.
\]
The case $p = 2$, $s > -1/2$. Let $v \in H^s_\Gamma(\R)$. Then $v^2 \in H^\sigma_\Gamma(\R)$ for $\sigma < - \frac12 + 2s$, say $\sigma = - \frac12 + 2s - \eps$.

We have $- \frac12 + 2s - \eps > s - 1$ for small $\eps > 0$ and so
\[
   v^2 \in H^{s-1}_\Gamma(\R),\qquad \|v^2\|_{H^{s-1}(\R)} \leq C \|v\|^2_{H^s(\R)}.
\]
Similarly, if $v, w \in H^\sigma_\Gamma(\R)$ then
\[
   v^2 - w^2 \in H^{s-1}_\Gamma(\R),\qquad \|v^2 - w^2\|_{H^{s-1}(\R)} \leq C \|v-w\|_{H^s(\R)}\|v+w\|_{H^s(\R)}.
\]
It follows that
\begin{itemize}
\item[(a)] $u \in \cC([-T,T]:H^s_\Gamma(\R)) \Rightarrow u^2 \in \cC([-T,T]:H^{s-1}_\Gamma(\R))$. Indeed, apply the estimate above to $v = u(t+h)$, $w = u(t)$.
\item[(b)] $\cM:\cB_T \to \cB_T$. This follows from the estimate
\[
\|\cM u(t)\|_{H^s(\R)} \leq \|u_0\|_{H^s(\R)} + \|u_1\|_{H^{s-1}(\R)} + C\int_0^t \|u(\tau)\|^p_{H^{s}(\R)}\dd\tau.
\]
\item[(c)] $\cM$ is a contraction on $\cB_T$ for small $T$. This follows from the similar estimate
\[
\|\cM u(t) - \cM v(t)\|_{H^s(\R)} \leq  C\int_0^t \|u(\tau) - v(\tau)\|_{H^{s}(\R)}\sup_{0\leq \tau\leq T}\|u(\tau) - v(\tau)\|_{H^{s}(\R)}\dd\tau.
\]
\end{itemize}
Existence and uniqueness follow. Also, $u = \cM u$ and $u \in \cC([-T,T]:H^s_\Gamma(\R))$ implies that
$u \in \cC^1([-T,T]:H^{s-1}_\Gamma(\R))$. Finally, Gronwall's inequality gives local Lipschitz continuity.

For $p\geq 3$ a similar argument works using factorization of $v^p - w^p$ and Proposition \ref{prop:product}.
\end{proof}

\begin{remark}
Note that the lower bound in \eqref{eq:rangesPS}  is smaller than $s_{\rm sob}$ in \eqref{eq:sob} only for $p=2$ and $p = 3$, so in these cases Theorem \ref{thm:1DNLW} improves the results of \cite{Christ:2003,Forlano:2020}.
\end{remark}

\section{Extensions to semilinear wave equations in higher dimensions}\label{sec:ndim}

The results of the previous sections focused on new phenomena for semilinear wave equations on the real line. We now discuss their extension to higher dimensions.
We shall again give explicit solutions to \eqref{eq:NLW} exhibiting anomalous propagation of singularities. In these examples, the nonlinear term $u^p$ even exists in the classical sense of the $p$-th power of an $L^p_{\rm loc}$-function, which in this case coincides with the $p$-th power taken in the sense of the Fourier product.

In addition to $(x+\ii 0)^{\lambda}$, which was considered above, we introduce the radially symmetric pseudofunction $r^\lambda\in\cS'(\R^n)$, defined by
\[
   \langle r^\lambda,\varphi\rangle = \int|x|^\lambda \varphi(x)\dd x
\]
for $\Re \lambda > -n$. It can be extended to a meromorphic $\cS'(\R^n)$-valued function of $\lambda \in \C$ with simple poles at $\lambda = - n -2k, k\in \N$, \cite[Section I.3.9]{GelfandShilov:1964}.
Its Fourier transform is given by
\[
\cF\big(r^\lambda\big)(\rho) = \pi^{-\lambda - n/2}\frac{\Gamma\left(\frac{\lambda + n}{2}\right)}{\Gamma\left(\frac{-\lambda}{2}\right)}\rho^{-\lambda - n}
\]
for $\lambda\neq -n-2k$ and $\lambda\neq 2k, k\in\N$  \cite[Section II.3.3]{GelfandShilov:1964}, \cite[Formula (VII,7;13)]{Schwartz:1966}.

\begin{remark}
The product of the pseudofunctions $r^\lambda$ is defined as a Fourier product: First, one may extend the definition to $\lambda \in\C$ by taking finite parts in the poles. It was proved in \cite[Satz 5]{Ortner:1980} that the $\cS'$-convolution of ${\rm Pf\,} r^\alpha$ and ${\rm Pf\,} r^\beta$ exists if and only if $\Re (\alpha + \beta) < -n$. This can be used to characterize the range of exponents for which the Fourier product exists. However, for the present paper, only the range $\lambda\in \R$, $-n < \lambda < 0$ will be needed, in which case both $r^\lambda$ and $\cF(r^\lambda)$ are locally integrable functions. We show that -- in the indicated range of exponents -- the Fourier product of $r^\lambda$ and $r^\mu$ exists if $\lambda + \mu > -n$. Up to constant factors, the respective Fourier transforms are $\rho^{-\lambda-n}$ and $\rho^{-\mu-n}$. Take $\varphi \in \cS(\R^n)$. Then $(\varphi \ast {\rho}^{-\lambda-n})\rho^{-\mu-n} \in \cE'(\R^n) + L^1(\R^n) \subset \cD_{L^1}'(\R^n)$ provided $-\lambda - n -\mu -n + n - 1 < -1$. This is exactly the case when $\lambda +\mu > -n$. Thus the Fourier product of $r^\lambda$ and $r^\mu$ exists in this range. A proof that $r^\lambda\!\cdot\! r^\mu = r^{\lambda+\mu}$ can be found, e.g., in \cite[Example 5.4]{MO:1992}.
Incidentally, $r^\lambda$, $r^\mu$ and $r^{\lambda+\mu}$ are locally integrable functions in the range $ 0 > \lambda, \mu > -n$, $\lambda +\mu > -n$, and the usual product equals $r^\lambda r^\mu =r^{\lambda+\mu}$, thus coincides with the Fourier product. The same holds for integer powers $(r^\lambda)^p = r^{\lambda p}$ when $\lambda p > -n$.
\end{remark}

Outside the poles, the pseudofunctions $r^\lambda$ satisfy
\[
   \Delta r^\lambda = \lambda(\lambda + n -2)r^{\lambda - 2}.
\]
In particular, when $\lambda > 2-n$ and $p = 1 - 2/{\lambda}$, $r^\lambda$ belongs to $L^p_{\rm loc}(\R^n)$, $(r^\lambda)^p = r^{\lambda p}$ and it satisfies the elliptic equation
\[
   \Delta r^\lambda = \lambda(\lambda+n-2)(r^\lambda)^p,
\]
where the derivatives are understood in the weak sense and the $p$th power as the evaluation of the Nemytskii operator $L^p_{\rm loc}(\R^n) \to L^1_{\rm loc}(\R^n)$.

\begin{remark} \label{rem:HsPrpoertiesOfPseudofunctions2}
The following properties are easy to show. We assume here that $\lambda < 0$ so that the pseudofunction $\rho^{-\lambda - n}$ is locally integrable.

\noindent (a) For $\lambda < 0$, the following equivalences hold:
\begin{itemize}
\item[(1)]  $r^\lambda \in H^s_{\rm loc}(\R^n)\ \Leftrightarrow\ s < \lambda + \frac{n}2$,
\item[(2)]  $r^\lambda \in H^s(\R^n)\ \Leftrightarrow\ \lambda < -\frac{n}2 \ {\rm and\ }s < \lambda + \frac{n}2$.
\end{itemize}
(b) If $-n < \lambda < 0$, both $r^\lambda$ and its Fourier transform belong to $L^1_{\rm loc}(\R^n)$ and
\[
r^\lambda \in H^s_{\rm loc}(\R^n) \Leftrightarrow s < \lambda + \frac{n}{2}.
\]
The conditions $\lambda - 2 > - n$ and $\lambda < 0$ can only be satisfied if $n\geq 3$.
In the ranges of $\lambda$ under consideration, $r^\lambda$ belongs to $L^p_{\rm loc}(\R^n)$.
\end{remark}
\begin{proposition}\label{prop:NLWnD}
Let $n\geq 3$. For every $s<\frac{n}2$ there are $\lambda<0$ and $p\in\N$ such that

(a) the distribution $u_0(x) = r^\lambda$ belongs to $H^s_{\rm loc}(\R)$, $\singsupp u_0 = \{0\}$, and

(b) $u(x,t)\equiv u_0(x)$ is a distributional solution to the semilinear wave equation
\begin{equation}\label{eq:NLWnD}
  \p_t^2 u - \Delta u = -\lambda(\lambda -1)u^p,\quad u(x,0) = u_0(x), \ \p_t u(x,0) = 0,
\end{equation}
where the nonlinear term is understood in the sense of the Nemytskii operator $L^p_{\rm loc}(\R^n) \to L^1_{\rm loc}(\R^n)$. Its singular support is the noncharacteristic line $\{(0,t):t\in\R\}$.
\end{proposition}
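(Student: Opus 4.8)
The plan is to mirror the one-dimensional construction of Proposition \ref{prop:NLW1D}, replacing $(x+\ii 0)^\lambda$ by the radial pseudofunction $r^\lambda$ and the identity $\frac{\dd^2}{\dd x^2}(x+\ii 0)^\lambda = \lambda(\lambda-1)(x+\ii 0)^{\lambda-2}$ by the Laplacian identity $\Delta r^\lambda = \lambda(\lambda+n-2)r^{\lambda-2}$ recorded above. First I would fix the relation between $\lambda$ and $p$ that turns the $p$-th power into a shifted power: choosing $\lambda = \frac{2}{1-p}$ forces $\lambda p = \lambda-2$, hence $(r^\lambda)^p = r^{\lambda p} = r^{\lambda-2}$ in the range where all exponents are locally integrable, where the Nemytskii power coincides with the Fourier product power by the discussion preceding the proposition.

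Next I would exploit stationarity. Since $u(x,t)\equiv u_0(x)$ is independent of $t$, one has $\p_t^2 u = 0$, so the space-time equation collapses to the spatial elliptic identity $-\Delta r^\lambda = -\lambda(\lambda+n-2)(r^\lambda)^p$, which is exactly the displayed equation before the proposition, with coefficient $\lambda(\lambda+n-2)$ (this reduces to $\lambda(\lambda-1)$ when $n=1$; the precise constant is normalized by rescaling $u$ by a constant, as in the one-dimensional case). For this to be a genuine distributional identity with $(r^\lambda)^p$ read through the Nemytskii operator I need $r^\lambda$ and $r^{\lambda-2}$ locally integrable and $r^\lambda\in L^p_{\rm loc}(\R^n)$; all three reduce to $\lambda-2 > -n$, i.e.\ $p > \frac{n}{n-2}$. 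This is the step that forces $n\geq 3$, since by Remark \ref{rem:HsPrpoertiesOfPseudofunctions2}(b) the constraints $\lambda < 0$ and $\lambda-2 > -n$ are jointly satisfiable only then. For such $p$ the family stays strictly above the lowest pole $\lambda = -n$, so no $\delta$-type correction terms enter the Laplacian formula.

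For part (a) I would invoke Remark \ref{rem:HsPrpoertiesOfPseudofunctions2} to get $r^\lambda\in H^s_{\rm loc}(\R^n)$ precisely for $s < \lambda + \frac{n}2 = \frac{2}{1-p} + \frac{n}2$; letting $p\to\infty$ pushes $\lambda\to 0^-$ and hence $\lambda+\frac n2\to\frac n2$, so every target $s<\frac n2$ is attained by taking $p$ large enough while keeping $p > \frac{n}{n-2}$. Since $r^\lambda$ agrees with the smooth function $|x|^\lambda$ away from the origin and is genuinely singular there for $\lambda < 0$, its singular support is $\{0\}$. Assembling the pieces: $u(x,0) = r^\lambda = u_0$ and $\p_t u(x,0) = 0$ by stationarity, the reduced elliptic identity yields the distributional equation \eqref{eq:NLWnD}, and $\singsupp u = \singsupp u_0\times\R = \{(0,t):t\in\R\}$, a line in the timelike, hence noncharacteristic, direction $(0,1)$.

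The analytic content has largely been front-loaded into the preceding remarks, so I expect the only real obstacle to be bookkeeping of the exponent constraints: confirming that $\lambda = \frac{2}{1-p} < 0$, $\lambda-2 > -n$, and $s < \lambda + \frac n2$ can hold simultaneously, and in particular that this is possible exactly for $n\geq 3$. The subtler point I would make sure to address is that the identity $\Delta r^\lambda = \lambda(\lambda+n-2)r^{\lambda-2}$ is being used as an equality of tempered distributions; staying strictly above the pole at $\lambda = -n$ guarantees that finite-part regularizations are unnecessary and that no concentrated contribution at the origin is lost when identifying $\Delta u$ with the Nemytskii power $C u^p$.
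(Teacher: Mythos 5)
Your proposal is correct and follows essentially the same route as the paper's (much terser) proof: set $\lambda=\frac{2}{1-p}$ so that $(r^\lambda)^p=r^{\lambda-2}$, use stationarity to reduce to the elliptic identity $\Delta r^\lambda=\lambda(\lambda+n-2)r^{\lambda-2}$, check the integrability constraints forcing $n\geq 3$, and let $p\to\infty$ to reach any $s<\frac{n}{2}$. Your observation that the coefficient should read $-\lambda(\lambda+n-2)$ rather than $-\lambda(\lambda-1)$ (up to normalizing $u$ by a constant) is a correct and worthwhile catch.
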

\begin{proof}
It is clear that the function $u(x,t)$ satisfies the semilinear wave equation \eqref{eq:NLWnD} when $\lambda = \frac2{1-p}$.
As noted, $r^{\lambda}$ belongs to $H^s_{\rm loc}(\R)$ iff $s < \lambda + \frac{n}2 = \frac2{1-p} + \frac{n}2$. Let $p\to\infty$ to produce the desired $s$.
\end{proof}
\begin{remark} (a) When $s>\frac{n+1}2$, the anomalous singular support of the solution from Proposition \ref{prop:NLWnD} would be ruled out by the results in \cite{Rauch:1979}.
Thus there is a gap between the counterexamples in Proposition \ref{prop:NLWnD} ($s < \frac{n}2$) and the nonlinear propagation results for $s>\frac{n+1}2$.

(b) In order to have that $r^\lambda \in H^s(\R^n)$ it is necessary that $\lambda < -\frac{n}2$ and $s < \lambda + \frac{n}2$ (Remark \ref{rem:HsPrpoertiesOfPseudofunctions2}). This cannot occur for $p\geq 2$ and $n\geq 2$.
\end{remark}

It remains to be checked whether other radial solutions to the semilinear Laplace equation \cite{Cote:2018,Quittner:2019} can serve for constructing anomalous solutions to \eqref{eq:NLW}.

Lorentz transformations can be applied, similar to the one-dimensional problem, to transform the stationary solutions
$u_0(x) = r^\lambda$ from Proposition \ref{prop:NLWnD}
to time dependent solutions with singular support on noncharacteristic rays.

 Indeed, suppose that $u(x,t)\equiv u_0(x)$ is a stationary solution to
\[
  \p_t^2 u - \Delta u = f(u).
\]
Let $|c| < 1$ and set
\[
   v(x_1,\ldots,x_n,t) = u_0\Big(\frac{x_1+ct}{\sqrt{1-c^2}}, x_2, \ldots, x_n\Big).
\]
Then
\[
  \p_t^2 v - \Delta v = - f(v).
\]
The singular support of $v$ is the ray $\{x_1 + ct = 0, x_2 = 0,\ldots, x_n=0: t \in \R\}$, which lies inside the light cone.

Similarly, the choice of $|c|> 1$ and
\[
   w(x_1,\ldots,x_n,t) = u_0\Big(\frac{\pm x_1+ct}{\sqrt{c^2-1}}, x_2, \ldots, x_n\Big)
\]
gives a solution to $\p_t^2 w - \Delta w = f(w)$ with singular support on a ray in a coordinate plane outside the light cone. By means of a rotation of the $x$-coordinate system, one may produce solutions whose singular support is any ray not contained in the light cone.

\section*{Acknowledgements}

We thank Justin Forlano, Oana Pocovnicu and Jeffrey Rauch for fruitful discussions.


\end{document}